\newcommand{\nospacepunct}[1]{\makebox[0pt][l]{\,#1}}
\theoremstyle{plain}
\newtheorem{theorem}[subsection]{Theorem}
\newtheorem{lemma}[subsection]{Lemma}
\newtheorem{proposition}[subsection]{Proposition}
\newtheorem{corollary}[subsection]{Corollary}
\theoremstyle{definition}
\newtheorem{definition}[subsection]{Definition}
\newtheorem{example}[subsection]{Example}
\theoremstyle{remark}
\newtheorem*{ack}{Acknowledgements}
\newtheorem{remark}[subsection]{Remark}
\numberwithin{equation}{subsection}
\renewcommand{\mod}[1]{\operatorname{mod}(#1)}
\newcommand{\proj}[1]{\operatorname{proj}(#1)}
\newcommand{\ccat}[2][]{\operatorname{Ch}^{#1}(#2)}
\newcommand{\cbcat}[1]{\ccat[b]{#1}}
\newcommand{\clcat}[1]{\ccat[+]{#1}}
\newcommand{\crcat}[1]{\ccat[-]{#1}}
\newcommand{\kcat}[2][]{\operatorname{K}^{#1}(#2)}
\newcommand{\kbcat}[1]{\kcat[b]{#1}}
\newcommand{\dcat}[2][]{\operatorname{D}^{#1}(#2)}
\newcommand{\dbcat}[1]{\dcat[b]{#1}}
\newcommand{\Ac}[2][]{\operatorname{Ac}^{#1}(#2)}
\newcommand{\cat}[1]{{\mathcal{#1}}}
\DeclareMathOperator{\cone}{cone}
\DeclareMathOperator{\Ext}{Ext}
\DeclareMathOperator{\Hom}{Hom}
\DeclareMathOperator{\id}{id}
\DeclareMathOperator{\im}{im}
\newcommand{\stab}[1]{\underline{#1}}
\newcommand{\susp}{\Sigma}
\DeclareMathOperator{\thick}{thick}
\newcommand{\BZ}{\mathbb{Z}}
\newcommand{\sB}{\mathsf{B}}
\newcommand{\sF}{\mathsf{F}}
\newcommand{\sR}{\mathsf{R}}
\title[Realization functors in algebraic triangulated categories]{Realization functors in algebraic triangulated categories}
\date{\today}
\keywords{realization functor, algebraic triangulated category, exact category, Frobenius category}
\subjclass[2020]{18G80 18G35}
\author[J.~C.~Letz]{Janina C. Letz}
\address{Janina~C.~Letz,
Faculty of Mathematics,
Bielefeld University,
PO Box 100 131,
33501 Bielefeld,
Germany}
\email{jletz@math.uni-bielefeld.de}
\author[J.~Sauter]{Julia Sauter}
\address{Julia Sauter,
Faculty of Mathematics,
Bielefeld University,
PO Box 100 131,
33501 Bielefeld,
Germany}
\email{jsauter@math.uni-bielefeld.de}
\begin{document}

\begin{abstract}
Let $\cat{T}$ be an algebraic triangulated category and $\cat{C}$ an extension-closed subcategory with $\Hom (\cat{C}, \susp^{<0} \cat{C})=0$. Then $\cat{C}$ has an exact structure induces from exact triangles in $\cat{T}$. Keller and Vossieck say that there exists a triangle functor $\dbcat{\cat{C}} \to \cat{T}$ extending the inclusion $\cat{C} \subseteq \cat{T}$. We provide the missing details for a complete proof. 
\end{abstract}

\maketitle

\section{Introduction}

Let $\cat{T}$ be a triangulated category and $\cat{C}$ full additive subcategory with an exact structure. A \emph{realization functor} for $\cat{C}$ is a triangle functor $\dbcat{\cat{C}} \to \cat{T}$ extending the inclusion. There are various constructions of a realization functor, all requiring an enhancement and restricting to certain subcategories $\cat{C}$. The first realization functor was constructed in \cite{Beilinson/Bernstein/Deligne:1982} when $\cat{C}$ is the heart of a t-structure in a filtered triangulated category; also see \cite[Appendix]{Psaroudakis/Vitoria:2018}. A different construction appears in \cite{Neeman:1991}.

In this paper we work in algebraic triangulated categories; these include all stable module categories and derived categories. Unlike the works mentioned above we consider exact subcategories of $\cat{T}$, not hearts of t-structures. There exist exact categories whose bounded derived category does not admit a bounded t-structure; see \cite{Neeman:2024}. 

The following result appears in \cite[3.2 Th\'eor\`eme]{Keller/Vossieck:1987}:

\begin{theorem} \label{main}
Let $\cat{T}$ be an algebraic triangulated category and $\cat{C}$ an extension-closed full subcategory with $\Hom_{\cat{T}}(\cat{C},\susp^{-n} \cat{C}) = 0$ for $n \geq 1$. Then $\cat{C}$ has an exact structure induced from the triangulated structure on $\cat{T}$ and there exists a realization functor. 
\end{theorem}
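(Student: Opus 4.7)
The plan is to leverage the hypothesis that $\cat{T}$ is algebraic: $\cat{T}$ is triangle equivalent to the stable category $\stab{\cat{F}}$ of a Frobenius exact category $\cat{F}$ with projective-injective subcategory $\cat{P}$. I would identify $\cat{T}$ with $\stab{\cat{F}}$ for the remainder of the argument. First I would establish the exact structure on $\cat{C}$ by declaring a triple in $\cat{C}$ to be a conflation precisely when it underlies an exact triangle in $\cat{T}$ whose three terms all lie in $\cat{C}$. The hypothesis $\Hom_{\cat{T}}(\cat{C},\susp^{-1}\cat{C})=0$ forces the connecting morphism of such a triangle to be uniquely determined by the two underlying morphisms, so this class of conflations is well-defined as a class of triples, and extension-closedness of $\cat{C}$ combined with standard triangulated manipulations gives Quillen's axioms.

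For the realization functor I would lift $\cat{C}$ to the preimage $\tilde{\cat{C}} \subseteq \cat{F}$ of $\cat{C}$ under the projection $\cat{F} \to \stab{\cat{F}}$. Then $\tilde{\cat{C}}$ is extension-closed in $\cat{F}$, contains $\cat{P}$, and inherits an exact structure whose conflations project to the conflations in $\cat{C}$ defined above. The realization would be constructed as the composite
\[
\dbcat{\cat{C}} \longleftarrow \kbcat{\cat{C}} \hookrightarrow \kbcat{\tilde{\cat{C}}} \xrightarrow{\operatorname{tot}} \stab{\cat{F}} = \cat{T},
\]
where $\operatorname{tot}$ is the triangle functor that sends a bounded complex in $\tilde{\cat{C}}$ to its total object, built inductively as an iterated cone in $\stab{\cat{F}}$: for each differential $X^{i-1} \to X^i$, choose an inflation $X^{i-1} \hookrightarrow I$ in $\cat{F}$ with $I \in \cat{P}$ and take the pushout along $X^{i-1} \to X^i$, so that the resulting object of $\cat{F}$ represents the cone of $X^{i-1} \to X^i$ in $\stab{\cat{F}}$. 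The projective-injective property of $\cat{P}$ makes $\operatorname{tot}$ independent of the choices up to canonical isomorphism in $\stab{\cat{F}}$ and upgrades it to a triangle functor on the homotopy category.

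The main obstacle is the leftmost arrow: one must show that the composite $\kbcat{\cat{C}} \to \cat{T}$ kills the bounded acyclic complexes $\Acb{\cat{C}}$, so that it descends to $\dbcat{\cat{C}} = \kbcat{\cat{C}}/\Acb{\cat{C}}$. This is precisely where the full hypothesis $\Hom_{\cat{T}}(\cat{C},\susp^{-n}\cat{C}) = 0$ for all $n \geq 1$ is essential: an acyclic bounded complex in $\cat{C}$ is an iterated extension of conflations, each of which lifts to an exact triangle in $\cat{T}$, and the higher-degree vanishing (for $n \geq 2$) is what removes the obstructions to splicing these triangles so that the iterated total object becomes zero in $\stab{\cat{F}}$. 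Once the descent is verified, the resulting triangle functor $\dbcat{\cat{C}} \to \cat{T}$ extends the inclusion on objects of $\cat{C}$ viewed in degree zero, and is triangulated by construction, yielding the desired realization.
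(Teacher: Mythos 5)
Your overall strategy---pass to the Frobenius model $\cat{T}=\stab{\cat{F}}$, lift $\cat{C}$ to $\tilde{\cat{C}}=q^{-1}(\cat{C})$ and then build a functor from bounded complexes in $\cat{C}$ into $\stab{\cat{F}}$---is the right one and matches the spirit of the paper's proof, but the middle arrow in your composite
\[
\kbcat{\cat{C}} \hookrightarrow \kbcat{\tilde{\cat{C}}}
\]
does not exist and is, in fact, precisely where all of the technical difficulty lives. Objects of $\kbcat{\cat{C}}$ are honest complexes in $\stab{\cat{F}}$. If you choose a lift of each object to $\cat{F}$ and a lift of each differential, the lifted differentials only satisfy $d^2 \equiv 0$ modulo morphisms factoring through $\cat{P}$; they do not square to zero in $\cat{F}$. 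So a complex in $\cat{C}$ does not lift to a complex in $\tilde{\cat{C}}$, and there is no ``inclusion'' $\kbcat{\cat{C}}\hookrightarrow\kbcat{\tilde{\cat{C}}}$. The paper addresses this exactly by proving that the functor in the \emph{other} direction, $\kbcat{\cat{B}}/\kbcat{\cat{P}} \to \kbcat{\cat{C}}$ with $\cat{B}=q^{-1}(\cat{C})$, is a triangle equivalence (Proposition~\ref{kcat_stable}), and this is where the full non-negativity hypothesis $\Hom_{\cat{T}}(\cat{C},\susp^{-n}\cat{C})=0$ for all $n\geq 1$ is used: it is needed, degree by degree, to kill the obstructions when correcting a non-commuting lift to a genuine chain map after stabilization (see Lemma~\ref{functor_full}). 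Once that equivalence is established, a quasi-inverse gives the desired functor from $\kbcat{\cat{C}}$.

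A second, more minor, misattribution: you place the role of the non-negativity hypothesis at the \emph{descent} step, i.e.\ killing $\Acb{\cat{C}}$ to pass from $\kbcat{\cat{C}}$ to $\dbcat{\cat{C}}$. That step is actually automatic once one has a triangle functor $\kbcat{\cat{C}} \to \cat{T}$ extending the inclusion and the exact structure is the one induced from triangles in $\cat{T}$ (this is Lemma~\ref{weak_real_to_real}: every $\cat{C}$-conflation already lies on a triangle, so its totalization has zero cone). No higher vanishing is needed there. Where the higher vanishing is indispensable is in constructing the functor on $\kbcat{\cat{C}}$ at all, for the reason above. Finally, your iterated-cone ``$\operatorname{tot}$'' construction on $\kbcat{\tilde{\cat{C}}}$ would, if set up carefully, recover the known equivalence $\stab{\cat{F}} \simeq \kbcat{\cat{F}}/\kbcat{\cat{P}}$ on the subcategory $\tilde{\cat{C}}$; the paper simply cites that equivalence rather than rebuilding it.
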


The article \cite[3.2 Th\'eor\`eme]{Keller/Vossieck:1987} provides a sketch of the proof, referring to a construction later appearing in \cite{Keller:1990}. The main goal of this paper is to provide the missing details for a complete proof of \cref{main}. 

The non-negativity condition in \cref{main} for $\cat{C}$ is necessary for our construction. It also appears when the realization functor is a triangle equivalences. In fact, whenever the realization functor is fully faithful, then $\cat{C}$ has to satisfy the non-negativity condition. 

\Cref{main} can be considered the standard tool to realize an (algebraic) triangulated as a bounded derived category of an exact category; we provide conditions for when the realization functor is an equivalence in \cref{equivalence}. Therefore, \cref{main} is expected to be used in classifications of exact subcategories of a triangulated category up to (bounded) derived equivalence. 

Further, finding a realization functor is an alternative to tilting theory. Tilting subcategories in a triangulated category were defined by Keller; see for example \cite{Keller/Krause:2020}. A subcategory $\cat{C}$ of $\cat{T}$ is \emph{tilting}, if $\cat{C}$ is endowed with the split exact structure, hence $\dbcat{\cat{C}} = \kbcat{\cat{C}}$, and the realization functor $\kbcat{\cat{C}} \to \cat{T}$ exists and is a triangle equivalence. There exist realization functors that are equivalences that are not induced by tilting theory; for example the inclusion of a small exact category into its weak idempotent completion induces a triangle equivalence on their bounded derived categories; see \cite[1.10]{Neeman:1990}. 

In general it is not known whether a realization functor of a category $\cat{C}$ is unique. However, it is unique with respect to the chosen enhancement. Theorem \ref{main} is also central to the search for a universal property defining the bounded derived category of an exact category; cf.\@ \cite{Keller:1991} and for derivators \cite{Porta:2015}.

\begin{ack}
We would like to thank the referee for pointing out \cref{counterexample_nonnegative}.

The authors were partly funded by the Deutsche Forschungsgemeinschaft (DFG, German Research Foundation)---Project-ID 491392403---TRR 358. The first author was also partly funded by the Alexander von Humboldt Foundation in the framework of a Feodor Lynen research fellowship endowed by the German Federal Ministry of Education and Research. 
\end{ack}

\section{Realization functor} \label{sec:construction}

The bounded derived category of an exact category $\cat{C}$ is the Verdier quotient of the homotopy category of the underlying additive category by the full subcategory of bounded $\cat{C}$-acyclic complexes $\Ac[b]{\cat{C}}$; see \cite{Neeman:1990} and also \cite[Section~10]{Buehler:2010}. We fix a triangulated category $\cat{T}$ with suspension functor $\susp$. A \emph{realization functor} for an additive subcategory $\cat{C}$ of $\cat{T}$ with an exact structure is a triangle functor $\dbcat{\cat{C}} \to \cat{T}$ extending the inclusion $\cat{C} \to \cat{T}$. 

\subsection{Admissible exact subcategories} 

In this work we focus on subcategories $\cat{C}$ of the triangulated category $\cat{T}$ that inherit their exact structure from the triangulated structure of $\cat{T}$. 

\begin{definition}
A full subcategory $\cat{C}$ is called \emph{non-negative} if $\Hom_{\cat{T}}(\cat{C},\susp^{<0} \cat{C}) = 0$; this means $\Hom_{\cat{T}}(X,\susp^{n} Y) = 0$ for any $X,Y \in \cat{C}$ and $n < 0$. When $\cat{C}$ is additionally closed under extensions and direct summands, we say $\cat{C}$ is \emph{admissible exact}. 
\end{definition}

By \cite{Dyer:extrinotes}, any extension-closed, non-negative subcategory $\cat{C}$ of a triangulated category $\cat{T}$ inherits an exact structure from the triangulated structure: The short exact sequences $L \xrightarrow{f} M \xrightarrow{g} N$ in $\cat{C}$ are precisely those that fit into an exact triangle $L \xrightarrow{f} M \xrightarrow{g} N \xrightarrow{h} \susp L$. 

\begin{remark}
With the notation of `admissible exact' we follow \cite[D\'efi\-ni\-tion~1.2.5]{Beilinson/Bernstein/Deligne:1982} and \cite[Section~2]{Hubery:2016}; the former only considers `admissible abelian', while the latter dropped `exact'. We use admissible exact to avoid confusion with the notions of left/right admissible in the sense of \cite[\S1]{Bondal/Kapranov:1989}. 
\end{remark}

The crucial condition of admissible exactness is the non-negativity. In fact, when $\cat{C}$ is non-negative, then the smallest full subcategory closed under extensions and direct summands containing $\cat{C}$ is an admissible exact subcategory.

\begin{example}
We equip an extension-closed subcategory $\cat{C}$ of an exact category $\cat{E}$ with the induced exact structure; that is $\cat{C}$ is a \emph{fully exact} subcategory of $\cat{E}$. Then $\cat{C}$ is an admissible exact subcategory of $\dbcat{\cat{E}}$. 
\end{example}

\begin{example}
The heart of any t-structure on a triangulated category is admissible exact. Any intersection of admissible exact subcategories is admissible exact. Hence the intersection of two hearts is admissible exact; this applies in particular for hearts that are mutations of each other; see \cite{Chen:2010} for HRS tilting and \cite{Broomhead/Pauksztello/Ploog/Woolf:2023} for the heart fan of an abelian category.
\end{example}

\subsection{Weak realization functor}

Next, we consider triangle functors $\kbcat{\cat{C}} \to \cat{T}$ extending the inclusion for any full subcategory $\cat{C}$ of $\cat{T}$; such a functor can be considered as a realization functor for $\cat{C}$ with the split exact structure. We call such a functor a \emph{weak realization functor}. Under reasonable conditions on the exact structure a weak realization functor induces a realization functor.

\begin{lemma} \label{weak_real_to_real}
Let $\cat{C} \subseteq \cat{T}$ be a full subcategory with an exact structure. We assume there exists a weak realization functor $\sF \colon \kbcat{\cat{C}} \to \cat{T}$. If any exact sequence $L \to M \to N$ in $\cat{C}$ fits into an exact triangle $L \to M \to N \to \susp L$ in $\cat{T}$, then $\sF$ induces a realization functor such that the following diagram commutes
\begin{equation*}
\begin{tikzcd}
\kbcat{\cat{C}} \ar[r] \ar[dr] & \dbcat{\cat{C}} \ar[d] \\
& \cat{T} \nospacepunct{.}
\end{tikzcd}
\end{equation*}
In particular, this holds when $\cat{C}$ is an admissible exact subcategory. 
\end{lemma}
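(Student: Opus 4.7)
The plan is to invoke the universal property of the Verdier quotient. Since $\dbcat{\cat{C}} = \kbcat{\cat{C}} / \Acb{\cat{C}}$, the triangle functor $\sF$ descends to $\dbcat{\cat{C}}$ precisely when $\sF(X) \cong 0$ for every $X$ in the thick subcategory $\Acb{\cat{C}}$ of bounded acyclic complexes. As $\sF$ is a triangle functor, the class $\{X : \sF(X) \cong 0\}$ is itself thick, so it suffices to verify this vanishing on a set of thick generators of $\Acb{\cat{C}}$. A standard induction on length (cutting a bounded acyclic complex at a cycle $Z^m$ to exhibit it as an iterated extension of shorter acyclic pieces) reduces to showing $\sF(X) \cong 0$ for every $3$-term acyclic complex $X = [L \xrightarrow{f} M \xrightarrow{g} N]$ arising from a conflation in $\cat{C}$.

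Fix such an $X$. Since $gf = 0$, the assignment $\beta \colon \cone(f) \to N$ given by $g$ in degree $0$ and $0$ in degree $-1$ defines a chain map; a direct calculation of the mapping cone shows $\cone(\beta) \cong \susp X$ in $\kbcat{\cat{C}}$, so there is a distinguished triangle $\cone(f) \xrightarrow{\beta} N \to \susp X \to \susp \cone(f)$. Applying $\sF$ yields a triangle
\begin{equation*}
\sF(\cone(f)) \xrightarrow{\sF(\beta)} N \to \susp \sF(X) \to \susp \sF(\cone(f))
\end{equation*}
in $\cat{T}$. The hypothesis provides an exact triangle $L \xrightarrow{f} M \xrightarrow{g} N \to \susp L$ in $\cat{T}$; comparing it with the image under $\sF$ of the canonical triangle $L \xrightarrow{f} M \to \cone(f) \to \susp L$ produces via TR3 and the $5$-lemma an isomorphism $\alpha \colon \sF(\cone(f)) \xrightarrow{\cong} N$ compatible with $g$. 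If $\sF(\beta) = \alpha$, the image triangle reads $N \xrightarrow{\cong} N \to \susp \sF(X) \to \susp N$ and forces $\sF(X) \cong 0$.

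The main obstacle is therefore establishing $\sF(\beta) = \alpha$. Both morphisms $\sF(\cone(f)) \to N$ have the same precomposition with the canonical $\sF(M) \to \sF(\cone(f))$, namely $g$, so applying $\Hom_{\cat{T}}(-,N)$ to the triangle $L \to M \to \sF(\cone(f)) \to \susp L$ shows their difference factors through the connecting morphism, yielding an element of $\Hom_{\cat{T}}(\susp L, N) = \Hom_{\cat{T}}(L, \susp^{-1} N)$. When $\cat{C}$ is admissible exact, this Hom vanishes by the non-negativity condition, so $\sF(\beta) = \alpha$ and we conclude. The \emph{in particular} clause of the lemma is then immediate from the paper's recalled characterization (via Dyer) of conflations in admissible exact subcategories as precisely those sequences arising from exact triangles in $\cat{T}$.
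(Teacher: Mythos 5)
Since the paper gives no proof of this lemma (it carries a \qed), I am comparing your argument against the standard expectations rather than against a written proof. Your overall strategy — reduce to showing $\sF$ kills bounded acyclic complexes, reduce further by a thick-generation/d\'evissage argument to $3$-term complexes arising from conflations, and then analyze the mapping cone of $\beta\colon\cone(f)\to N$ — is the right one, and the bookkeeping with $\cone(\beta)\cong\susp X$ is fine up to the usual sign/shift conventions.

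However, your argument proves only the \emph{in particular} clause, not the lemma as stated. The lemma makes no non-negativity assumption: it asks only that $\cat{C}$ be a full subcategory with an exact structure and that every conflation completes to an exact triangle in $\cat{T}$. To conclude $\sF(X)\cong 0$ you need $\sF(\beta)$ to be an \emph{isomorphism}, and you attempt this by showing $\sF(\beta)=\alpha$, where $\alpha$ is the TR3 fill-in. You correctly identify that $\alpha-\sF(\beta)$ factors through the connecting map, giving an obstruction class $\gamma\in\Hom_{\cat{T}}(\susp L,N)\cong\Hom_{\cat{T}}(L,\susp^{-1}N)$ — but this group is only killed by non-negativity, which you are not given. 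Nor does it help to ask merely that $\sF(\beta)$ be invertible rather than equal to $\alpha$: writing $\sF(\beta)=\alpha(\id-\alpha^{-1}\gamma\delta)$, the endomorphism $\alpha^{-1}\gamma\delta$ of $\sF(\cone(f))$ has no a priori reason to be nilpotent or to have $\id-\alpha^{-1}\gamma\delta$ invertible. So as written the general statement is not reached. (In fairness, the paper's remark immediately after the lemma reinterprets the hypothesis as saying $\cat{C}\to\cat{T}$ is a $\delta$-functor in Keller's sense; if one reads the hypothesis as fixing the triangle \emph{compatibly} so that $h\circ\sF(\beta)$ equals the connecting map of $\sF(L\to M\to\cone(f)\to\susp L)$, then $\sF(\beta)$ is a morphism of triangles and hence an isomorphism by the two-out-of-three lemma, with no non-negativity needed. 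Your argument would then close once you observe that the right-hand square commutes by that compatibility.) Since the paper applies the lemma only in the admissible exact case, your proof covers everything that is actually used, but you should flag that the general clause requires either the $\delta$-functor reading of the hypothesis or a further argument, rather than presenting the non-negative case as completing the whole proof. One small stylistic point: you should also record why the cycles $Z^m$ of a bounded $\cat{C}$-acyclic complex lie in $\cat{C}$ (this is part of the definition of $\cat{C}$-acyclic), since that is what makes the inductive cut into shorter acyclic pieces legitimate.
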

\begin{proof}
It is enough to show that $\sF$ sends acyclic complexes to zero. For this it is enough to show that complexes of the form
\begin{equation*}
(\cdots \to 0 \to L \to M \to N \to 0 \to \cdots) = \cone(\cone(L \to M) \to N)
\end{equation*}
are send to zero when $L \to M \to N$ is an exact sequence in $\cat{C}$. But this holds by assumption.
\end{proof}

\begin{remark}
The above condition on $\sF$, that any exact sequence in $\cat{C}$ fits into an exact triangle in $\cat{T}$, means that $\cat{C} \to \cat{T}$ is a \emph{$\delta$-functor} as defined in \cite{Keller:1991}. 
\end{remark}

In the sequel we construct a weak realization functor. However, we do not know of a general criterion for the existence of a weak realization functor. Our construction requires some form of non-negativity. In particular, a weak realization functor may even exist for $\cat{C}=\cat{T}$.

\begin{example}\label{example:trivial}
Let $k$ be a field and $\cat{T} = \operatorname{vect}(k)$, the category of finite-dimensional $k$-vector spaces with suspension $\susp = \id$. We can view $\kbcat{\cat{T}}$ as the category of finite-dimensional $\BZ$-graded $k$-vector spaces $\operatorname{vect}^\BZ(k)$ with suspension the shift of the grading. The forgetful functor from graded $k$-vector spaces to ungraded $k$-vector spaces is a weak realization functor for $\cat{C}=\cat{T}$. As $\dbcat{\cat{T}} = \kbcat{\cat{T}}$ we obtain the realization functor
\begin{equation*}
\dbcat{\cat{T}} = \operatorname{vect}^\BZ(k) \to \operatorname{vect}(k) = \cat{T}\,.
\end{equation*}
\end{example}

\subsection{Existence}

A \emph{Frobenius category} is an exact category with enough projectives and with enough injectives and the projectives and injectives coincide. Let $\cat{E}$ be a Frobenius category with $\cat{P}$ the full subcategory of projective-injective objects. The ideal quotient $q\colon \cat{E} \to \stab{\cat{E}}$ with respect to the morphisms factoring through $\cat{P}$ has a natural triangulated structure by \cite[I.2]{Happel:1988}. A triangulated category is \emph{algebraic}, if it is triangle equivalent to $\stab{\cat{E}}$ for some Frobenius category; see \cite[3.6]{Keller:2006b}. 

The key observation for the proof of \cref{main} is the following result, which is stated in \cite[3.2]{Keller/Vossieck:1987}.

\begin{proposition} \label{kcat_stable}
Let $\cat{E}$ be a Frobenius category with $\cat{P}$ the full subcategory of projective-injective objects and $q \colon \cat{E} \to \stab{\cat{E}}$ the canonical functor. Let $\cat{C} \subseteq \stab{\cat{E}}$ be a non-negative full subcategory and set $\cat{B} \colonequals q^{-1}(\cat{C})$. Then the functor $\cat{B} \to \cat{C}$ induces an equivalence of triangulated categories
\begin{equation*}
\kbcat{\cat{B}}/\kbcat{\cat{P}} \to \kbcat{\cat{C}}\,.
\end{equation*}
\end{proposition}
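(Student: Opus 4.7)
The plan is to establish that $\bar q$ is a triangle equivalence by checking in turn well-definedness, essential surjectivity, and full faithfulness. For well-definedness, note that $q(\cat{P}) = 0 \in \cat{C}$ forces $\cat{P} \subseteq \cat{B}$, so $\kbcat{\cat{P}}$ sits inside $\kbcat{\cat{B}}$ as a triangulated subcategory; it is moreover thick because summands of projective-injective objects remain projective-injective. Applying $q$ termwise yields a triangle functor $\kbcat{\cat{B}} \to \kbcat{\cat{C}}$ that annihilates $\kbcat{\cat{P}}$, and the universal property of the Verdier quotient produces $\bar q$.

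The essential content is a simultaneous induction on length establishing: (i) every bounded complex in $\cat{C}$ is strictly in the essential image of $\kbcat{\cat{B}} \to \kbcat{\cat{C}}$; (ii) every morphism in $\kbcat{\cat{C}}$ between images of lifts is represented by a roof in $\kbcat{\cat{B}}/\kbcat{\cat{P}}$; (iii) two roofs with equal image in $\kbcat{\cat{C}}$ are equivalent. For the inductive step of (i), I would use the stupid truncation triangle
\begin{equation*}
    \sigma^{\leq n-1} C^\bullet \longrightarrow C^\bullet \longrightarrow C^n[-n] \longrightarrow \sigma^{\leq n-1} C^\bullet[1]
\end{equation*}
in $\kbcat{\cat{C}}$: given a lift $\tilde B^\bullet$ of $\sigma^{\leq n-1} C^\bullet$ and the obvious lift of $C^n[-n]$, apply (ii) in lower length to lift the connecting morphism to $\kbcat{\cat{B}}/\kbcat{\cat{P}}$ and take its cone. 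For (ii) and (iii), I would invoke the left calculus of fractions for $\kbcat{\cat{B}}/\kbcat{\cat{P}}$: lift a chain map termwise to $\cat{B}$, observe that its failure to commute with the differentials factors through $\cat{P}$ in each degree, and package these corrections into a twisted-complex / mapping-telescope construction that produces a quasi-isomorphism $s \colon \tilde B^\bullet \to B^\bullet$ with $\cone(s) \in \kbcat{\cat{P}}$ together with an honest chain map $\tilde B^\bullet \to B'^\bullet$. Null-homotopies are lifted similarly.

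The non-negativity hypothesis $\Hom_{\stab{\cat{E}}}(\cat{C}, \susp^{<0} \cat{C}) = 0$ enters as the vanishing of the obstruction groups at each iterated correction in the twisted-complex construction: in the Frobenius setting, the successive correction terms are governed by connecting maps landing in $\Hom_{\stab{\cat{E}}}(\cat{C}, \susp^{-k} \cat{C})$ for $k \geq 1$, which vanish exactly when $\cat{C}$ is non-negative. The main obstacle is executing this twisted-complex construction with complete bookkeeping---identifying the obstruction at each stage, introducing the correct projective-injective summands to absorb it, keeping the complex bounded, and verifying that signs and compositions match---so that the simultaneous induction closes cleanly.
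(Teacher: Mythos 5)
Your outline is broadly aligned with the correct strategy, and you have placed the non-negativity hypothesis exactly where it matters: it is what kills the iterated obstructions when you try to lift a chain map along a roof. But the proposal stops short of the actual proof at precisely the hard step, and the paper's decomposition of the argument is different in ways worth understanding.

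For fullness the paper does not run an induction on the length of complexes. Instead it proves a standalone lemma: given a chain map $f\colon q(X)\to q(Y)$ with $X\in\clcat{\cat{B}}$ and $Y\in\crcat{\cat{B}}$, one produces chain maps $g\colon\hat X\to Y$ and $s\colon\hat X\to X$ with $\cone(s)\in\cbcat{\cat{P}}$ and $q(g)=f\circ q(s)$. The ``twisted-complex construction with complete bookkeeping'' that you flag as the main obstacle is indeed where the content lies, and it has a definite shape: one first builds an iterated degreewise-split injective resolution $X\rightarrowtail I_0\to I_1\to\cdots$ of the \emph{complex} $X$, using a lemma of Keller that any left bounded complex admits a degreewise inflation into a left bounded complex of projective-injectives. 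Then one lifts $f$ termwise to $\hat f\colon X\to Y$ and inductively constructs correction maps $g_\ell\colon I_\ell\to Y[\ell+1]$ with $\partial(g_{\ell-1})=g_\ell h_{\ell-1}$; the non-negativity of $\cat{C}$ is used at each step to show that the induced map $\Omega^{-\ell-1}X\to Y[\ell+2]$ vanishes in $\ccat{\stab{\cat{E}}}$, and hence factors through $j_{\ell+1}$. Boundedness of $Y$ terminates the process, and one then forms the total complex $J$ of $I_0\to\cdots\to I_n$ and sets $\hat X=\susp^{-1}\cone(X\to J)$. Without this you have the skeleton of an argument but not the argument.

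For faithfulness you propose to ``lift null-homotopies similarly,'' which would force you to run an analogous bookkeeping argument a second time. The paper instead proves a separate lemma: if $X\in\kbcat{\cat{E}}$ satisfies $q(X)=0$ in $\kbcat{\stab{\cat{E}}}$, then $X\in\kbcat{\cat{P}}$; this is a clean induction on the length of $X$, splitting off one degree at a time using that $q(d^0)$ is a split monomorphism. Combined with fullness, faithfulness then follows from the general fact that a full triangle functor with trivial kernel is faithful (Rickard; Pu--Ringel). This is a genuinely different and more economical route, and you should be aware of it.

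For essential surjectivity you do not need the truncation-triangle induction: the essential image of $\bar q$ is a triangulated subcategory closed under isomorphisms, and it contains all complexes concentrated in a single degree (these lift trivially since $q\colon\cat{B}\to\cat{C}$ is essentially surjective by construction), which generate $\kbcat{\cat{C}}$. That one-line argument replaces your step (i). Finally, a minor point: your justification that $\kbcat{\cat{P}}$ is thick in $\kbcat{\cat{B}}$ (``summands of projective-injective objects remain projective-injective'') addresses closure under summands at the object level, not at the level of bounded complexes in the homotopy category, and the latter is not automatic; fortunately thickness is not needed to form the Verdier quotient, so this can simply be dropped.
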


Note, that in the equivalence connects the Verdier quotient of the homotopy category and the homotopy category of an ideal quotient. We postpone the proof to \cref{sec:proof}. 

\begin{remark}
In the \namecref{kcat_stable} we show that the tilting subcategory $\cat{B}$ in $\kbcat{\cat{B}}$ is send to the tilting subcategory $\cat{C}$ under the Verdier quotient functor $\kbcat{\cat{B}} \to \kbcat{\cat{B}}/\kbcat{\cat{P}}$. In general, Verdier quotients need not preserve tilting subcategories. 
\end{remark}

Without the assumption that the subcategory $\cat{C}$ is non-negative the \cref{kcat_stable} is false in general:

\begin{example} \label{counterexample_nonnegative}
Let $k$ be a field and $A = k[x]/(x^2)$. Then $\cat{E} = \mod{A}$ is a Frobenius category and $\stab{\cat{E}} = \mod{k}$ is the category of finite-dimensional vector spaces which is a triangulated category with $\susp = \id$. We show below that $\kbcat{\mod{A}}/\kbcat{\proj{A}}$ is not equivalent to $\kbcat{\mod{k}}$, that is that the conclusion of \cref{kcat_stable} does not hold for $\cat{C} = \stab{\cat{E}}$, which is not non-negative.
Observe first that $\kbcat{\mod{k}}= \dbcat{\mod{k}}$ has no non-trivial thick subcategory. But on the other hand $\kbcat{\mod{A}}/\kbcat{\proj{A}}$ admits a non-trivial Verdier quotient 
\[
\kbcat{\mod{A}}/\kbcat{\proj{A}} \to \dbcat{\mod{A}}/\kbcat{\proj{A}} \cong \stab{\cat{E}}\,;
\]
see \cite[Theorem~4.4.1]{Buchweitz:2021}. In particular, the kernel of this Verdier quotient is a non-trivial thick subcategory. Therefore they can not be triangle equivalent.
\end{example}

\begin{proposition} \label{existence_weak_real}
Let $\cat{E}$ be a Frobenius category with $\cat{P}$ the full subcategory of projective-injective objects and $q \colon \cat{E} \to \stab{\cat{E}}$ the canonical functor. Let $\cat{C} \subseteq \stab{\cat{E}}$ be an admissible exact subcategory. Then there exists a weak realization functor $\kbcat{\cat{C}} \to \stab{\cat{E}}$. 
\end{proposition}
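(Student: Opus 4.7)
The strategy is to leverage \cref{kcat_stable} to reduce the problem. Setting $\cat{B} \colonequals q^{-1}(\cat{C})$, I would construct a triangle functor $\widetilde{\sF}\colon \kbcat{\cat{B}} \to \stab{\cat{E}}$ extending $q|_{\cat{B}}\colon \cat{B} \to \cat{C} \subseteq \stab{\cat{E}}$, show it vanishes on $\kbcat{\cat{P}}$, and descend along the triangle equivalence $\kbcat{\cat{B}}/\kbcat{\cat{P}} \xrightarrow{\sim} \kbcat{\cat{C}}$ of \cref{kcat_stable} to obtain the desired $\sF\colon \kbcat{\cat{C}} \to \stab{\cat{E}}$.

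The heart of the construction is a triangle functor $\kbcat{\cat{E}} \to \stab{\cat{E}}$ extending the canonical additive functor $q$; restricting to $\cat{B}$ then yields $\widetilde{\sF}$. The existence of such an extension reflects the algebraic structure on $\stab{\cat{E}}$: every bounded complex in $\cat{E}$ can be ``totalised'' to a single object of $\stab{\cat{E}}$ via iterated mapping cones built out of the Frobenius structure. Concretely, for a two-term complex $X^0 \xrightarrow{d} X^1$, choosing an admissible monomorphism $X^0 \hookrightarrow I$ with $I \in \cat{P}$ and forming the pushout $I \sqcup_{X^0} X^1$ yields an object whose image in $\stab{\cat{E}}$ realises the mapping cone of $d$. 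Longer complexes are handled inductively, truncating one term at a time and taking cones.

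Assuming $\widetilde{\sF}$ is in place, the remainder is formal. Because $q(\cat{P}) = 0$, the functor $\widetilde{\sF}$ vanishes on $\kbcat{\cat{P}}$, so the universal property of Verdier localisation factors it through $\kbcat{\cat{B}}/\kbcat{\cat{P}}$. Composing with a quasi-inverse of the equivalence of \cref{kcat_stable} produces a triangle functor $\sF\colon \kbcat{\cat{C}} \to \stab{\cat{E}}$. That $\sF$ extends the inclusion $\cat{C} \subseteq \stab{\cat{E}}$ is essentially by construction: an object $C \in \cat{C}$ corresponds under the equivalence of \cref{kcat_stable} to itself, viewed as a stalk complex in $\cat{B}$, whose totalisation under $\widetilde{\sF}$ is $q(C) = C$.

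The main obstacle is the construction of the extension $\kbcat{\cat{E}} \to \stab{\cat{E}}$. While totalisation is a classical idea (cf.\ \cite{Keller:1990}), turning it into an honest triangle functor, rather than an assignment well-defined only up to non-canonical isomorphism, requires either explicit use of the dg-enhancement of $\stab{\cat{E}}$ provided by the Frobenius structure on $\cat{E}$, or a careful homotopy-category-level verification that compatible choices of cone objects can be made across morphisms.
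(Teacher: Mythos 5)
Your reduction to \cref{kcat_stable} is exactly the paper's strategy, and the formal descent along the Verdier localization is fine. But the heart of your argument---the existence of a triangle functor $\kbcat{\cat{E}} \to \stab{\cat{E}}$ extending $q$---is left as a gap, which you acknowledge yourself. Iterated cone constructions in a bare triangulated category do not produce a well-defined functor, since cones are not functorial, so the ``careful homotopy-category-level verification'' you mention is the real mathematical content, and it is not supplied here.

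The paper fills this gap without constructing anything by hand: it invokes the known equivalence $\sB\colon \stab{\cat{E}} \xrightarrow{\sim} \dbcat{\cat{E}}/\kbcat{\cat{P}}$ (see \cite[Corollary~2.2]{Iyama/Yang:2020} or \cite[Proposition~4.4.18]{Krause:2022}). The composite
\[
\kbcat{\cat{E}} \longrightarrow \dbcat{\cat{E}} \longrightarrow \dbcat{\cat{E}}/\kbcat{\cat{P}} \xrightarrow{\ \sB^{-1}\ } \stab{\cat{E}}
\]
is automatically a triangle functor, being a composition of Verdier localizations with a triangle equivalence, and it agrees with $q$ on stalk complexes. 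This is precisely the totalisation functor you are after, obtained for free. Once you have it, the rest of your argument---vanishing on $\kbcat{\cat{P}}$, descent to $\kbcat{\cat{B}}/\kbcat{\cat{P}}$, and composition with the quasi-inverse of the equivalence from \cref{kcat_stable}---goes through exactly as in the paper, so the only thing missing is to replace your unfinished hand-built extension by this citation.
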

\begin{proof}
Set $\cat{B} \colonequals q^{-1}(\cat{C})$. By \cref{kcat_stable} there exists an equivalence of triangulated categories 
\begin{equation*}
\sF \colon \kbcat{\cat{B}}/\kbcat{\cat{P}} \to \kbcat{\cat{C}}\,.
\end{equation*}
There is also an equivalence
\begin{equation*}
\sB \colon \stab{\cat{E}} \to \dbcat{\cat{E}}/\kbcat{\cat{P}}\,;
\end{equation*}
this has been stated in \cite[Example~2.3]{Keller/Vossieck:1987} with proofs provided in \cite[Corollary~2.2]{Iyama/Yang:2020} or \cite[Proposition~4.4.18]{Krause:2022}. Then the following composition involving the quasi-inverses of the above functors yields the claim
\begin{equation*}
\kbcat{\cat{C}} \xrightarrow{\sF^{-1}} \kbcat{\cat{B}}/\kbcat{\cat{P}} \to \kbcat{\cat{E}}/\kbcat{\cat{P}} \xrightarrow{\sB^{-1}} \stab{\cat{E}}\,. \qedhere
\end{equation*}
\end{proof}

\begin{proof}[Proof of Theorem \ref{main}]
By \cref{existence_weak_real} there exists a weak realization functor, and it induces a realization functor by \cref{weak_real_to_real}.
\end{proof}

From \cref{kcat_stable} we can also deduce the following corollary. 

\begin{corollary} \label{CorOfMain}
Let $\cat{C}$ be an admissible exact subcategory of $\underline{\cat{E}}$. Then $\cat{B}=q^{-1} (\cat{C} )$ is extension-closed in $\cat{E}$ and the functor $q \colon \cat{B} \to \cat{C}$ sends exact sequences to exact triangles. In this case $q$ induces a triangle equivalence 
\[
\dbcat{\cat{B}}/\kbcat{\cat{P}} \to \dbcat{\cat{C}}\,.
\]
\end{corollary}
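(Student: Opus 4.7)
The statement has three components: (i) $\cat{B} \colonequals q^{-1}(\cat{C})$ is extension-closed in $\cat{E}$; (ii) the restricted functor $q \colon \cat{B} \to \cat{C}$ sends conflations to exact triangles; and (iii) $q$ induces a triangle equivalence $\dbcat{\cat{B}}/\kbcat{\cat{P}} \xrightarrow{\sim} \dbcat{\cat{C}}$. The plan is to dispatch (i) and (ii) directly from the construction of the Frobenius triangulated structure, and to reduce (iii) via a Verdier double-quotient to \cref{kcat_stable} together with a comparison of acyclic subcategories.

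For (i) and (ii): by the very definition of the triangulated structure on $\stab{\cat{E}}$, any conflation $B_1 \rightarrowtail E \twoheadrightarrow B_3$ in $\cat{E}$ induces an exact triangle $qB_1 \to qE \to qB_3 \to \susp qB_1$ in $\stab{\cat{E}}$. If $B_1, B_3 \in \cat{B}$, then $qB_1, qB_3 \in \cat{C}$, and admissible exactness of $\cat{C}$ forces $qE \in \cat{C}$, so $E \in \cat{B}$; this gives (i). Then (ii) is automatic: a conflation in $\cat{B}$ (with its exact structure induced from $\cat{E}$) is a conflation in $\cat{E}$ whose three terms lie in $\cat{B}$, and the resulting exact triangle in $\stab{\cat{E}}$ has all vertices in $\cat{C}$, hence is also a conflation in $\cat{C}$ by Dyer's characterization of its exact structure.

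For (iii), the iterated Verdier quotient yields
\[
\dbcat{\cat{B}}/\kbcat{\cat{P}} \;\simeq\; \kbcat{\cat{B}}\big/\thick(\Acb{\cat{B}} \cup \kbcat{\cat{P}}) \;\simeq\; \bigl(\kbcat{\cat{B}}/\kbcat{\cat{P}}\bigr)\big/\overline{\Acb{\cat{B}}}\,,
\]
where $\overline{\Acb{\cat{B}}}$ denotes the image of $\Acb{\cat{B}}$. Transporting along the equivalence $\sF \colon \kbcat{\cat{B}}/\kbcat{\cat{P}} \xrightarrow{\sim} \kbcat{\cat{C}}$ from \cref{kcat_stable}, the right-hand side becomes $\kbcat{\cat{C}}/\sF(\overline{\Acb{\cat{B}}})$, and the problem reduces to proving
\[
\sF(\overline{\Acb{\cat{B}}}) = \Acb{\cat{C}}
\]
as thick subcategories of $\kbcat{\cat{C}}$. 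The inclusion ``$\subseteq$'' is immediate from (ii), since $\sF$ is induced by $q$ and $q$ sends conflations in $\cat{B}$ to conflations in $\cat{C}$. For ``$\supseteq$'', I would invoke the standard fact (see \cite[Section~10]{Buehler:2010}, \cite{Neeman:1990}) that $\Acb{\cat{C}}$ is generated as a thick subcategory by the length-three complexes coming from conflations in $\cat{C}$, and lift each such generator: a conflation $C_1 \rightarrowtail C_2 \twoheadrightarrow C_3$ in $\cat{C}$ arises from an exact triangle in $\stab{\cat{E}}$, which by the construction of the Frobenius triangulated structure is isomorphic to one coming from a conflation $B_1 \rightarrowtail B \twoheadrightarrow B_3$ in $\cat{E}$ with $qB_i \cong C_i$ and $qB \cong C_2$ in $\stab{\cat{E}}$; admissibility of $\cat{C}$ forces $B_1, B, B_3 \in \cat{B}$, so the lift is a conflation in $\cat{B}$, whose image under $q$ is isomorphic in $\kbcat{\cat{C}}$ to the original short acyclic.

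The main obstacle is the generation statement used above for $\Acb{\cat{C}}$: in the greatest generality, $\Acb{\cat{C}}$ need not even be a thick subcategory of $\kbcat{\cat{C}}$ without a weak idempotent-completeness hypothesis, and some care is required to invoke the appropriate version here and analogously for $\Acb{\cat{B}}$. Once that is in place, the rest of the argument is bookkeeping with Verdier quotients, and the induced functor in the statement is precisely the one $q$ determines on the double quotient.
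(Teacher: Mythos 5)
Your proof is correct and it lands on the same essential reduction as the paper's, namely that the equivalence $\sF\colon \kbcat{\cat{B}}/\kbcat{\cat{P}}\to\kbcat{\cat{C}}$ of \cref{kcat_stable} identifies the acyclic complexes on both sides, but it gets there by a different route. The paper first observes that $\kbcat{\cat{P}}$ and $\Ac[b]{\cat{B}}$ are Hom-orthogonal inside $\kbcat{\cat{B}}$ and invokes \cite[Proposition~1.6.10]{Kashiwara/Schapira:1990} to conclude that $\Ac[b]{\cat{B}}$ embeds as a full subcategory of $\kbcat{\cat{B}}/\kbcat{\cat{P}}$; fullness/faithfulness of the restriction is then free, and essential surjectivity onto $\Ac[b]{\cat{C}}$ comes from the isomorphism $\Ext^1_{\cat{B}}(X,Y)\cong\Hom_{\stab{\cat{E}}}(X,\susp Y)\cong\Ext^1_{\cat{C}}(X,Y)$. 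You instead do the iterated Verdier quotient
\[
\dbcat{\cat{B}}/\kbcat{\cat{P}}\;\simeq\;\bigl(\kbcat{\cat{B}}/\kbcat{\cat{P}}\bigr)\big/\overline{\Ac[b]{\cat{B}}}
\]
and directly match $\sF(\overline{\Ac[b]{\cat{B}}})$ against $\Ac[b]{\cat{C}}$ by lifting conflations. Your ``$\supseteq$'' step (lift a conflation in $\cat{C}$ along the Frobenius construction and use admissibility to pull the middle term into $\cat{B}$) plays exactly the role that the $\Ext^1$ isomorphism plays in the paper, so the two arguments are equivalent in content. Your ``$\subseteq$'' step is stated a hair too quickly: you need $q$ to send every bounded $\cat{B}$-acyclic complex, not merely a conflation, into $\Ac[b]{\cat{C}}$; this is immediate since such a complex is a splice of conflations and $q$ preserves those, but you should say so. Finally, the concern you flag about $\Ac[b]{\cat{C}}$ failing to be thick is not a genuine obstruction here: a Verdier quotient $\cat{T}/\cat{N}$ depends only on the thick closure of $\cat{N}$, so it suffices to show the two subcategories generate the same thick subcategory of $\kbcat{\cat{C}}$, which your argument does. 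The paper's Hom-orthogonality step is the one ingredient you do not use; it buys a cleaner handling of the image of $\Ac[b]{\cat{B}}$ in the localization, whereas your version compensates with more explicit Verdier-quotient bookkeeping. Both are correct.
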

\begin{proof}
It is straightforward to check that $\kbcat{\cat{P}}$ and $\Ac[b]{\cat{B}}$ are Hom-orthogonal in $\kbcat{\cat{B}}$. Then $\Ac[b]{\cat{B}}$ is a full subcategory of $\kbcat{\cat{B}}/\kbcat{\cat{P}}$ by \cite[Proposition~1.6.10]{Kashiwara/Schapira:1990}. So it is enough to show that the equivalence from \cref{kcat_stable} restricts to an equivalence of the acyclic complexes $\Ac[b]{\cat{B}} \to \Ac[b]{\cat{C}}$. 

The fully faithfullness of the restriction holds as $\Ac[b]{\cat{B}}$ is a full subcategory of $\dbcat{\cat{B}}/\kbcat{\cat{P}}$. Essentially surjectivity holds as 
\begin{equation*}
\Ext_{\cat{B}}^1(X,Y) \cong \Hom_{\stab{\cat{E}}}(X,\susp Y) \cong \Ext^1_{\cat{C}}(X,Y)
\end{equation*}
for any $X,Y \in \cat{B}$. 
\end{proof}

\subsection{Fully faithfulness and equivalence} \label{equivalence}

Let $\cat{C}$ be an admissible exact subcategory of a triangulated category $\cat{T}$. In this section we discuss when a realization functor
\[
\sR \colon \dbcat{\cat{C}} \to \cat{T}
\]
is fully faithful and even an equivalence. The realization functor $\sR$ induces natural group homomorphisms 
\[
\Phi_n(X,Y) \colonequals (\Ext^n_{\cat{C}}(X,Y) \xrightarrow{\cong} \Hom_{\dbcat{\cat{C}}}(X,\susp^n Y) \xrightarrow{\sR} \Hom_\cat{T} (X, \susp^n Y))
\]
for $X,Y \in \cat{C}$ and $n \in \BZ$. Here $\Ext^n_{\cat{C}}$ are the groups of Yoneda extensions for $n \geq 0$ and we set $\Ext^n_{\cat{C}} \colonequals 0$ for $n<0$. For the isomorphism see for example \cite[Proposition~4.2.11]{Krause:2022}. These natural morphisms have been considered in \cite[Lemma~2.11]{Chen/Han/Zhou:2019} for hearts of t-structures and in \cite[A.8]{Positselski:2011} for exact subcategories. The morphism $\Phi_n(X,Y)$ is an isomorphism for $n < 0$ as $\cat{C}$ is non-negative, for $n = 0$ as $\cat{C}$ is full, and for $n = 1$ by \cite[Corollary~A.17]{Positselski:2011}. Further, for $n = 2$ it is a monomorphisms by \cite[Corollary~A.17]{Positselski:2011}. The following result appears in \cite[Remarque~3.1.17]{Beilinson/Bernstein/Deligne:1982} and \cite[Lemma~2.11]{Chen/Han/Zhou:2019} when $\cat{C}$ is the heart of a bounded t-structure. 

\begin{lemma} \label{ffreal}
Let $\cat{C}$ be an admissible exact subcategory of $\cat{T}$ and let $\sR$ be a realization functor of $\cat{C}$. Then the following are equivalent
\begin{enumerate}
\item \label{ffreal:ff} $\sR$ is fully faithful;
\item \label{ffreal:Phi_iso} $\Phi_n(X,Y)$ is an isomorphism for all $X,Y \in \cat{C}$ and $n \in \BZ$;
\item \label{ffreal:Phi_surj} $\Phi_n(X,Y)$ is surjective for all $X,Y \in \cat{C}$ and $n \in \BZ$;
\item \label{ffreal:defl} For every $X,Y \in \cat{C}$, $n \geq 1$ and every morphism $f \colon X\to \susp^n Y$ in $\cat{T}$ there exists a $\cat{C}$-deflation $d\colon Z \to X$ with $f\circ d=0$ in $\cat{T}$; and
\item[(4${}^{\rm op}$)] \label{ffreal:infl} For every $X,Y \in \cat{C}$, $n \geq 1$ and every morphism $f \colon X\to \susp^n Y$ in $\cat{T}$ there exists an $\cat{C}$-inflation $i \colon Y \to W$ such that $\susp^n i\circ f=0$ in $\cat{T}$.
\end{enumerate}
\end{lemma}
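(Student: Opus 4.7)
The plan is to close the cycle $(1)\Rightarrow(2)\Rightarrow(3)\Rightarrow(4)\Rightarrow(2)\Rightarrow(1)$ and to obtain the equivalence with (4${}^{\rm op}$) by the dual argument. The first three arrows are immediate from the definition of $\Phi_n$. For $(3)\Rightarrow(4)$, given $f\colon X\to\susp^n Y$ with $n\geq 1$, surjectivity of $\Phi_n$ lifts $f$ to some $\tilde f\in\Ext^n_{\cat{C}}(X,Y)$ represented by a length-$n$ Yoneda extension $0\to Y\to E_{n-1}\to\cdots\to E_0\to X\to 0$; taking $d$ to be the rightmost deflation $E_0\twoheadrightarrow X$, the composition $f\circ d$ factors through two consecutive morphisms of the triangle attached to $0\to\ker(d)\to E_0\to X\to 0$ (namely $d$ followed by the connecting morphism into $\susp\ker(d)$), hence vanishes.

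The substantive step is $(4)\Rightarrow(2)$. I would prove by induction on $n\geq 1$ that $\Phi_n$ is an isomorphism; the base case $n\leq 1$ is recorded in the paragraph preceding the lemma. For the inductive step, fix $n\geq 2$ and assume $\Phi_{n-1}$ is an isomorphism. Surjectivity: for $f\colon X\to\susp^n Y$, hypothesis (4) gives a $\cat{C}$-deflation $d\colon Z\to X$ with $f\circ d=0$, so the associated triangle $W\to Z\to X\xrightarrow{\delta}\susp W$ forces $f=\susp(g)\circ\delta$ for some $g\colon W\to\susp^{n-1}Y$; the inductive hypothesis yields $g=\Phi_{n-1}(\tilde g)$, and compatibility of $\sR$ with Yoneda composition identifies $f$ with $\Phi_n(\tilde g\cdot e)$, where $e\in\Ext^1(X,W)$ is the class of $0\to W\to Z\to X\to 0$. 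Injectivity: for $\tilde f\in\Ext^n(X,Y)$ with $\Phi_n(\tilde f)=0$, split off the leading short exact sequence $0\to Z\xrightarrow{i}E_0\to X\to 0$ of a Yoneda representative so that $\tilde f=\tilde g\cdot\tilde e$ with $\tilde g\in\Ext^{n-1}(Z,Y)$ and $\tilde e\in\Ext^1(X,Z)$; the vanishing of $\susp(\Phi_{n-1}(\tilde g))\circ\delta$ allows $\Phi_{n-1}(\tilde g)$ to be lifted along $i$ via the long exact sequence of $\Hom_{\cat{T}}(-,\susp^{n-1}Y)$ applied to the triangle of $\tilde e$, and the inductive hypothesis produces $\tilde h\in\Ext^{n-1}(E_0,Y)$ with $\tilde h\cdot i=\tilde g$. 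Then $\tilde f=\tilde h\cdot(i\cdot\tilde e)=0$, since pushing out $\tilde e$ along its own left-hand inclusion $i$ yields the split extension $0\to E_0\to E_0\oplus X\to X\to 0$.

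Finally $(2)\Rightarrow(1)$ is the routine promotion from $\cat{C}$ to $\dbcat{\cat{C}}$: the truncation triangles $\tau^{\leq k}A\to A\to\tau^{>k}A$ in $\dbcat{\cat{C}}$ map to exact triangles under $\sR$, and a double induction on the amplitudes of $A,B\in\dbcat{\cat{C}}$, together with the five lemma applied to the long exact $\Hom$-sequences, reduces fully faithfulness to the case $A,B\in\cat{C}$ covered by (2). The equivalence with (4${}^{\rm op}$) is proved by the entirely analogous argument using inflations and pushing out Yoneda extensions along their right-hand maps. I expect the main obstacle to be the Yoneda/pushout bookkeeping in the injectivity half of $(4)\Rightarrow(2)$, in particular verifying that $\sR$ intertwines Yoneda composition with composition in $\cat{T}$, so that $\Phi_n(\tilde g\cdot\tilde e)=\susp(\Phi_{n-1}(\tilde g))\circ\Phi_1(\tilde e)$, and that the relevant self-pushout indeed splits.
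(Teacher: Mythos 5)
Your proof is correct, and it takes a genuinely different route from the paper's. The paper proves $(1)\Leftrightarrow(2)$ by d\'evissage, $(2)\Leftrightarrow(3)$ by citing Positselski's Corollary A.17 for the backward direction, and then links $(2)$ with $(4)$ through two auxiliary factorization conditions (labelled (5) and (6) in the text): $(2)$ is first reformulated as the existence of a factorization $X=X_0\to\Sigma X_1\to\cdots\to\Sigma^nX_n=\Sigma^nY$ through objects of $\cat{C}$, and then, by a separate induction, as the existence of a single intermediate factor $X\to\Sigma U\to\Sigma^nY$; the comparison with $(4)$ is then done by completing the deflation $d$ to a conflation $U\to Z\to X$ and reading off the connecting map $X\to\Sigma U$. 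You instead close the cycle $(1)\Rightarrow(2)\Rightarrow(3)\Rightarrow(4)\Rightarrow(2)\Rightarrow(1)$ directly, with the essential work in $(4)\Rightarrow(2)$ done by an induction on $n$ carried out entirely in terms of the Yoneda algebra: you decompose extension classes into a degree-one piece and a degree-$(n-1)$ piece, transport across the long exact $\Hom$-sequence of the triangle of the degree-one piece, and conclude vanishing from the fact that pushing an extension out along the inclusion of its own kernel splits it. Your version is more self-contained for the implication $(3)\Rightarrow(2)$ (the paper outsources it to Positselski; you obtain it for free from the cycle), at the cost of the Yoneda-composition bookkeeping you correctly flag, namely the identity $\Phi_n(\tilde g\cdot\tilde e)=\Sigma(\Phi_{n-1}(\tilde g))\circ\Phi_1(\tilde e)$, which holds because the canonical isomorphism $\Ext^n_{\cat{C}}\cong\Hom_{\dbcat{\cat{C}}}(-,\Sigma^n-)$ intertwines Yoneda product with composition and $\sR$ is a triangle functor. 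Two small remarks: you still rely on Positselski for the base case $n=1$, so the dependence on that reference is not eliminated, only reduced; and in $(2)\Rightarrow(1)$ the truncation notation $\tau^{\le k}A\to A\to\tau^{>k}A$ suggests a t-structure, which $\dbcat{\cat{C}}$ need not have for an exact category $\cat{C}$ --- what you want are the stupid (brutal) truncations of complexes, or, as in the paper, a d\'evissage argument on $\thick_{\dbcat{\cat{C}}}(\cat{C})=\dbcat{\cat{C}}$; the intended induction is sound once this is made precise.
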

\begin{proof}
The implication \cref{ffreal:ff} $\implies$ \cref{ffreal:Phi_iso} is clear and the converse is an application of \emph{d\'evissage} using $\dbcat{\cat{C}} = \thick_{\dbcat{\cat{C}}}(\cat{C})$; see for example \cite[Lemma 3.1.8]{Krause:2022}. 

The implication \cref{ffreal:Phi_iso} $\implies$ \cref{ffreal:Phi_surj} is clear and the converse is shown in \cite[Corollary~A.17]{Positselski:2011}. 

A standard construction shows that \cref{ffreal:Phi_iso} is equivalent to
\begin{enumerate}
\setcounter{enumi}{4}
\item Every $f\colon X \to \susp^n Y$ in $\cat{T}$ with $X,Y \in \cat{C}$ and $n \geq 1$ decomposes as
\begin{equation*}
X = X_0 \to \susp X_1 \to \susp^2 X_2 \to \cdots \to \susp^n X_n = \susp^n Y
\end{equation*}
for $X_i \in \cat{C}$;
\end{enumerate}
see for example \cite[Lemma 2.1]{Chen/Han/Zhou:2019} for the abelian case. Moreover, by induction over $n$ this is also equivalent to
\begin{enumerate}
\setcounter{enumi}{5}
\item \label{ffreal:susp} Every $f\colon X \to \susp^n Y$ in $\cat{T}$ with $X,Y \in \cat{C}$ and $n \geq 1$ decomposes as $X \to \susp U\to \susp^nY$ for some $U \in \cat{C}$.
\end{enumerate}

So it is enough to show that \cref{ffreal:defl} and \cref{ffreal:susp} are equivalent. For the backward direction it is enough to observe that any morphism $X \to \susp U$ in $\cat{T}$ with $X,U \in \cat{C}$ induces an exact sequence $U \to Z \xrightarrow{d} X$ in $\cat{C}$. For the forward direction let $f \colon X \to \susp^n Y$ be a morphism in $\cat{T}$ with $X,Y \in \cat{C}$ and $n \geq 1$. Then there exists a deflation $d \colon Z \to X$ such that $f \circ d = 0$. We complete $d$ to an exact sequence $U \to Z \xrightarrow{d} X$ in $\cat{C}$. Then $f$ factors through the induced morphism $X \to \susp U$. This shows \cref{ffreal:susp}. 

The equivalence of \cref{ffreal:Phi_iso} and \cref{ffreal:infl} holds by an analogous argument.
\end{proof}

\begin{remark}
The previous Lemma can be strengthened to yield an explicit description of the image of $\Phi_n(X,Y)$. That is, the subgroup $\im(\Phi_n(X,Y))$ is the set of all morphisms $f \colon X \to \susp^n Y$ with $X,Y \in \cat{C}$ such that there exists a $\cat{C}$-deflation $d \colon Z \to X$ such that $f\circ d=0$.     
\end{remark}

For a subcategory $\cat{C}$ of a triangulated category $\cat{T}$ we denote by $\thick_\cat{T}(\cat{C})$ the smallest thick subcategory of $\cat{T}$ that contains $\cat{C}$.

\begin{corollary}
Let $\cat{C}$ be an admissible exact subcategory of $\cat{T}$. A realization functor of $\cat{C}$ is an equivalence of triangulated categories if and only if it is fully faithful and $\thick_{\cat{T}}(\cat{C}) = \cat{T}$. \qed
\end{corollary}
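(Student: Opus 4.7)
My plan is to establish both implications via the essential image of $\sR$.

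For the forward direction, if $\sR$ is a triangle equivalence, then it is tautologically fully faithful. To verify $\thick_{\cat{T}}(\cat{C}) = \cat{T}$, I would invoke the identity $\dbcat{\cat{C}} = \thick_{\dbcat{\cat{C}}}(\cat{C})$ already used in \cref{ffreal}. As a triangle equivalence, $\sR$ carries thick subcategories to thick subcategories, and it restricts to the identity on $\cat{C}$ by construction. Hence $\cat{T} = \sR(\dbcat{\cat{C}}) = \sR(\thick_{\dbcat{\cat{C}}}(\cat{C})) = \thick_{\cat{T}}(\cat{C})$.

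For the backward direction, let $\cat{S}$ denote the essential image of the fully faithful triangle functor $\sR$. Then $\cat{S}$ is a full, strict triangulated subcategory of $\cat{T}$ containing $\cat{C}$. Once I know $\cat{S}$ is a thick subcategory of $\cat{T}$, the inclusion $\cat{C} \subseteq \cat{S}$ combined with the hypothesis gives $\cat{T} = \thick_{\cat{T}}(\cat{C}) \subseteq \cat{S}$, so $\sR$ is also essentially surjective and therefore an equivalence.

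The main obstacle is showing that $\cat{S}$ is closed under direct summands inside $\cat{T}$. Given $Y = \sR(Y') \in \cat{S}$ and a splitting $Y \cong T \oplus T'$ in $\cat{T}$, the projection idempotent on $Y$ lifts, via fully-faithfulness, to an idempotent $e'$ on $Y' \in \dbcat{\cat{C}}$; any splitting of $e'$ produces an object whose image under $\sR$ is isomorphic to $T$, placing $T$ in $\cat{S}$. The outstanding point is therefore the idempotent completeness of $\dbcat{\cat{C}}$, which I would deduce from the fact that $\cat{C}$, being admissible exact, is closed under direct summands in $\cat{T}$ and hence idempotent complete; by Balmer--Schlichting, the bounded derived category of an idempotent complete exact category is itself idempotent complete, concluding the argument.
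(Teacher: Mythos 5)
Your two directions are set up correctly. The forward implication and the reduction of essential surjectivity to showing the essential image $\cat{S}$ of $\sR$ is thick are exactly the natural argument, and lifting idempotents along a fully faithful triangle functor is the right tool, so the whole thing indeed hinges on idempotent completeness of $\dbcat{\cat{C}}$.

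The one step that needs more care is the final appeal to Balmer--Schlichting. Their Theorem~2.8 establishes idempotent completeness for a triangulated category admitting a \emph{bounded t-structure}; the application to derived categories is stated for abelian categories, whose derived category has the standard t-structure. For an exact category $\cat{C}$ that is merely admissible exact, $\dbcat{\cat{C}}$ need not carry any bounded t-structure --- the paper itself emphasizes this, citing Neeman's example of an exact category whose bounded derived category has no bounded t-structure. So the statement ``the bounded derived category of an idempotent complete exact category is idempotent complete'' is not a direct consequence of the cited theorem and, as far as I am aware, requires a separate argument (or is not known in this generality). You should either locate a reference that treats the exact, non-abelian case, or give an argument that your particular $\dbcat{\cat{C}}$ is idempotent complete; absent that, the backward implication as you've written it has a genuine gap. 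Everything else is fine and consistent with what the paper's terse $\qed$ indicates the authors had in mind.
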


\begin{example}
Let $\cat{C}$ be a fully exact subcategory of $\cat{E}$. Then the induced functor $\sF \colon \dbcat{\cat{C}} \to \dbcat{\cat{E}}$ is a realization functor for $\cat{C} \subseteq \dbcat{\cat{E}}$. The functor $\sF$ is fully faithful if and only if the inclusion $\cat{C} \subseteq \cat{E}$ induces isomorphism on the Ext-groups. For example, the latter condition is satisfied by resolving subcategories; see \cite[Section~2]{Auslander/Reiten:1991b} and also \cite[Definition~5.1]{Henrard/Kvamme/vanRoosmalen:2022}. 

The functor $\sF$ is an equivalence if additionally $\cat{E}$ is the smallest additively-closed subcategory closed under the 2-out-of-three property containing $\cat{C}$. For example, this is satisfied by finitely resolving subcategories; cf.\@ \cite[Theorem~3.11(2)]{Henrard/vanRoosmalen:2020c}.
\end{example}

\section{\texorpdfstring{Proof of the main \namecref{kcat_stable}}{Proof of the main Proposition}} \label{sec:proof}

For clarity we use different notations for the suspension in the stable category and the homotopy category. We write $\susp$ for the suspension or shift functor in $\stab{\cat{E}}$ where $\cat{E}$ is a Frobenius exact category. By construction, we have $q(\Omega^n X) = \susp^{-n} X$ for any $X \in \cat{E}$ where $\Omega$ is the syzygy functor. On the other hand, for an additive category $\cat{A}$ we write $\ccat{\cat{A}}$ for the category of chain complexes. In $\ccat{\cat{A}}$ and the homotopy category $\kcat{\cat{A}}$, we denote the degree $n$ shift of a complex $X$ by $X[n]$; this is the complex given by
\begin{equation*}
X[n]^\ell = X^{\ell+n} \quad \text{and} \quad d_{X[n]} = (-1)^n d_X\,.
\end{equation*}
For a \emph{map of complexes} $f \colon X \to Y$ we write
\begin{equation*}
\partial(f) = d^Y f - f[-1] d^X \colon X \to Y[-1]\,.
\end{equation*}
The map $f$ is a \emph{chain map} if and only if $\partial(f) = 0$. Note, that a map of complexes need not commute with the differential, while a chain map does.

\begin{lemma} \label{functor_full}
Let $\cat{E}$ be a Frobenius exact category with $\cat{P}$ the full subcategory of projective-injective objects and $q \colon \cat{E} \to \stab{\cat{E}}$ the canonical functor. Let $\cat{C} \subseteq \stab{\cat{E}}$ be a non-negative full subcategory and set $\cat{B} \colonequals q^{-1}(\cat{C})$. For any chain map $f \colon q(X) \to q(Y)$ in $\ccat{\stab{\cat{E}}}$ with $X \in \clcat{\cat{B}}$ and $Y \in \crcat{\cat{B}}$ there exist chain maps $g \colon \hat{X} \to Y$ and $s \colon \hat{X} \to X$ with $\hat{X} \in \clcat{\cat{B}}$ and $\cone(s) \in \cbcat{\cat{P}}$ such that $q(g) = f \circ q(s)$. 
\end{lemma}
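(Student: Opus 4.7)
The plan is to lift $f$ componentwise to $\cat{E}$, measure the failure to be a chain map using the Frobenius structure, and then enlarge $X$ by a bounded complex of projective-injectives that absorbs this failure. Because $X$ is bounded below and $Y$ is bounded above, the nontrivial part of $f$ lives in a finite window of degrees, which is what ultimately makes $\cone(s)$ (up to homotopy equivalence in $\kcat{\cat{B}}$) a bounded complex of projective-injectives. I would begin by choosing, for each $n$, a lift $\tilde f^n \colon X^n \to Y^n$ in $\cat{E}$ of $f^n$. Since $q(\tilde f)$ is a chain map, the graded morphism $\partial(\tilde f) = d^Y \tilde f - \tilde f[1] d^X$ factors degreewise through $\cat{P}$. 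Exploiting that $\cat{E}$ has enough injectives, I would then fix inflations $\alpha^n \colon X^n \hookrightarrow I^n$ with $I^n \in \cat{P}$ and morphisms $\beta^n \colon I^n \to Y^{n+1}$ satisfying $\partial(\tilde f)^n = \beta^n \alpha^n$, trivialising the data outside the finite window.

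Next I would promote the graded pair $(\alpha,\beta)$ to honest chain-map data. Injectivity of $I^{n+1}$ together with $\alpha^n$ being an inflation yields differentials $d_I^{n+1} \colon I^n \to I^{n+1}$ making $(I,d_I)$ a bounded complex in $\cat{P}$ and $\alpha \colon X \to I$ a chain map. A parallel adjustment of $\beta^n$ within its equivalence class (modulo maps factoring through the cokernel of $\alpha^n$) enforces the identity $d_Y^n \beta^{n-1} + \beta^n d_I^n = 0$. The obstructions appearing in these inductive lifting problems lie in $\Hom$-groups of the form $\Hom_{\stab{\cat{E}}}(q(X^\bullet), \susp^{<0} q(Y^\bullet))$ and analogous groups; they vanish by the non-negativity hypothesis on $\cat{C}$, and this is the decisive place where the hypothesis is invoked.

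Finally, I would assemble everything by setting $\hat X^n \colonequals X^n \oplus I^{n-1}$ with
\[
d_{\hat X}^n \colonequals \begin{pmatrix} d_X^n & 0 \\ \alpha^n & -d_I^n \end{pmatrix},
\]
together with $s^n \colonequals (\id,0)$ and $g^n \colonequals (\tilde f^n, \beta^{n-1})$. The relations established in the previous paragraph force $d_{\hat X}^2 = 0$ and make $s$ and $g$ chain maps, while $q(g) = f \circ q(s)$ holds because $q(I^{n-1}) = 0$. The complex $\hat X$ lies in $\clcat{\cat{B}}$ since each $\hat X^n$ is the sum of $X^n \in \cat{B}$ and an object of $\cat{P}$. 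The mapping cone fits in a degreewise split short exact sequence of complexes $I \to \cone(s) \to \cone(\id_X)$ whose right-hand term is contractible, so $\cone(s)$ is isomorphic in $\kcat{\cat{B}}$ to the bounded complex of projective-injectives $I[1] \in \cbcat{\cat{P}}$. I expect the principal technical obstacle to be the simultaneous inductive control over $(d_I, \beta)$ — the combinatorial bookkeeping where non-negativity of $\cat{C}$ is critically used to kill the obstructions.
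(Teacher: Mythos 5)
The overall strategy is right and matches the paper's: lift $f$ degreewise to $\tilde f \colon X \to Y$, observe that $\partial(\tilde f)$ factors through projective\hbox{-}injectives, and absorb that failure by passing from $X$ to a mapping cone $\hat X$ over a bounded complex of projective\hbox{-}injectives. But the step where you "adjust $\beta^n$ within its equivalence class" to make $\beta$ an honest chain map is a genuine gap, and it is precisely the step that forces the paper to use an entire tower rather than a single complex $I$.

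Concretely: after fixing a chain map $\alpha \colon X \to I$ that is a degreewise inflation into projective\hbox{-}injectives (this much is fine and is \cite[4.1, Lemma, b)]{Keller:1990}), choose any graded map $\beta \colon I \to Y[1]$ with $\beta\alpha = \partial(\tilde f)$. Since $\partial^2 = 0$ we get $\partial(\beta)\alpha = 0$, so $\partial(\beta)$ factors as $c\,q$ through the cokernel $q \colon I \to \Omega^{-1}X$, and $c \colon \Omega^{-1}X \to Y[2]$ is itself a chain map. Replacing $\beta$ by $\beta + \delta q$ changes $\partial(\beta)$ to $(c + \partial(\delta))q$, so making $\beta$ a chain map requires $c = -\partial(\delta)$, i.e.\ $c$ must be \emph{null\hbox{-}homotopic in $\ccat{\cat{E}}$}. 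Non\hbox{-}negativity of $\cat{C}$ only gives that $q(c^n)=0$ in $\stab{\cat{E}}$ for each $n$ — that is, each component of $c$ factors through a projective\hbox{-}injective — which is strictly weaker and does not produce the needed $\delta$. (A secondary, related gap: your proposed construction of $d_I$ by injectivity need not satisfy $d_I^2 = 0$; the square factors through $\operatorname{coker}(\alpha)$, and killing it runs into the same type of obstruction.) The paper resolves exactly this by using that the degreewise vanishing of $q(c)$ lets $c$ factor through a \emph{further} inflation $j_1 \colon \Omega^{-1}X \to I_1$ into projective\hbox{-}injectives, giving a new (still not chain) map $g_1 \colon I_1 \to Y[2]$, and iterating. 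The process terminates because $Y$ is right bounded and the $I_\ell$ have a common lower bound, so after finitely many steps the degree ranges no longer overlap and $g_n$ is automatically a chain map. Your complex $I$ is then, in effect, the total complex $J$ of $I_0 \to I_1 \to \cdots \to I_n$, not a degreewise choice of a single injective. So the proposal correctly identifies the shape of $\hat X$, $s$, and $g$, but it does not supply the mechanism — the tower and its total complex — that actually produces a chain map $\beta$, and the obstruction theory you invoke (vanishing in $\stab{\cat{E}}$) is one notch too weak to do it in one step.
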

\begin{proof}
First we construct an injective resolution $I$ of $X$ in the category of complexes. By \cite[4.1, Lemma, b)]{Keller:1990}, there exists a left bounded complex $I_0$ of projective-injective objects and a chain map $j_0 \colon X \to I_0$ that is an inflation in each degree. We denote the cokernel of $j_0$ by $q_0 \colon I_0 \to \Omega^{-1} X$. Continuing this process, we obtain a sequence of chain maps
\begin{equation*}
\begin{tikzcd}[column sep=small,row sep=small]
&[+2em] & \Omega^{-1} X \ar[dr,tail,"j_1"] && \Omega^{-2} X \ar[dr,tail,"j_2"] && \Omega^{-3} X \ar[dr,tail] \\
X \ar[r,tail,"h_{-1} = j_0"] & I_0 \ar[rr,"h_0" swap] \ar[ur,two heads,"q_0"] && I_1 \ar[rr,"h_1" swap] \ar[ur,two heads,"q_1"] && I_2 \ar[rr] \ar[ur,two heads,"q_2"] && \cdots \nospacepunct{.}
\end{tikzcd}
\end{equation*}
We set $h_{-1} \colonequals j_0$ and $h_\ell \colonequals j_{\ell+1} q_\ell$. As $X$ is left bounded we may assume that there exists an integer $s$ such that $(I_\ell)^{\leq s} = 0$ for all $\ell$; that is $s$ is a universal lower bound. Since the maps $j_\ell$ are degreewise inflations, every map from $\Omega^{-\ell} X$ to a complex of projective-injective objects factors through $j_\ell$.

We take a lift of $f$ to a map of complexes $\hat{f} \colon X \to Y$ in $\ccat{\cat{E}}$. This map need not commute with the differential. However, as it is the lift of a chain map in $\stab{\cat{E}}$ the map $\partial(\hat{f})$ factors through a complex of projective-injective objects. So there exists a map $g_0 \colon I_0 \to Y[-1]$ such that $\partial(\hat{f}) = g_0 j_0$. For convenience we set $q_{-1} \colonequals \id_X$ and $g_{-1} \colonequals \hat{f}$. We now inductively construct maps $g_\ell \colon I_\ell \to Y[-\ell-1]$ with $\partial(g_{\ell-1}) = g_\ell j_\ell q_{\ell-1}$. 

We assume that we have constructed the maps for any integer $\leq \ell$ for some $\ell \geq 0$. Then $0 = \partial(g_\ell) j_\ell q_{\ell-1}$ as $j_\ell$ and $q_{\ell-1}$ are chain maps. As $q_{\ell-1}$ consists of deflations in each degree, we get $0 = \partial(g_\ell) j_\ell$. Hence $\partial(g_\ell)$ factors through $q_\ell$ and we obtain the commutative diagram
\begin{equation*}
\begin{tikzcd}
\Omega^{-\ell} X \ar[r,tail,"j_\ell"] \ar[dr,"0" swap] & I_\ell \ar[r,two heads,"q_\ell"] \ar[d,"\partial(g_\ell)" description] & \Omega^{-\ell-1} X \ar[dl] \ar[d,tail,"j_{\ell+1}"] \\
& Y[-\ell-2] & I_{\ell+1} \ar[l,dashed,"g_{\ell+1}"] \nospacepunct{.}
\end{tikzcd}
\end{equation*}
By the non-negativity of $\cat{C}$, we have 
\begin{equation*}
\Hom_{\ccat{\stab{\cat{E}}}}(q(\Omega^{-\ell-1} X),q(Y[-\ell-2])) = \Hom_{\ccat{\stab{\cat{E}}}}(\susp^{\ell+1} q(X),q(Y[-\ell-2])) = 0\,.
\end{equation*}
Hence the map $\Omega^{-\ell-1} X \to Y[-\ell-2]$ factors through $j_{\ell+1}$. Note, that $g_{\ell+1}$ need not be a chain map. We continue this process until the map $g_{\ell+1}$ is a chain map. As $Y$ is right bounded and the $I_\ell$'s have a universal upper bound, this will happen eventually.

Let $t$ be an integer such that $Y^{\geq t} = 0$. We replace $I_\ell$ by the truncation $(I_\ell)^{\geqslant t-\ell-1}$. This does not effect the properties of the $g_\ell$'s, as they are zero in the other degrees. To summarize, we have a sequence of maps
\begin{equation*}
\begin{tikzcd}
X \ar[r,tail,"h_{-1}"] \ar[d,"\hat{f} = g_{-1}"] & I_0 \ar[r,"h_0"] \ar[d,"g_0"] & I_1 \ar[r] \ar[d,"g_1"] & \cdots \ar[r] & I_{n-1} \ar[r,"h_{n-1}"] \ar[d,"g_{n-1}"] & I_n \ar[d,"g_n"] \\
Y & Y[-1] & Y[-2] & \cdots & Y[-n] & Y[-n-1] \nospacepunct{,}
\end{tikzcd}
\end{equation*}
where each $I_\ell$ is a bounded complex of projective-injective objects, $g_n$ is a chain map and $\partial(g_{\ell-1}) = g_\ell h_{\ell-1}$ and $h_\ell h_{\ell-1} = 0$ for $0 \leq \ell \leq n$. 

We take the total complex $J$ of $I_0 \to \cdots \to I_n$. This means as graded module $J = \bigoplus I_i[i]$ with differential
\begin{equation*}
\left. d_J \right|_{I_i[i]} = d_{I_i[i]} + (-1)^i h_i[i]\,.
\end{equation*}
For convenience we use a nonstandard sign convention. We set
\begin{equation*}
v \colonequals \sum_i g_i[i] \colon J \to Y[-1]\,.
\end{equation*}
This is a chain map, as
\begin{equation*}
\begin{aligned}
\left. (v d_J) \right|_{I_i[i]} &= g_i[i-1] d_{I_i[1]} + (-1)^i g_{i+1}[i] h_i[i] \\
&= (-1)^i (g_i[-1] d_{I_i} + g_{i+1} h_i)[i] \\
&= (-1)^i (d_{Y[-i-1]} g_i)[-i] = d_{Y[-1]} g_i[i] = \left. (d_{Y[-1]} v) \right|_{I_i[i]}\,.
\end{aligned}
\end{equation*}
One can similarly check that the composition $u \colonequals (X \to I_0 \to J)$ is a chain map. By construction we have $\partial(\hat{f}) = vu$. Then $\hat{X} = \susp^{-1} \cone(u)$ and $g = (-v, \hat{f})$ and the natural map $s \colon \hat{X} \to X$ satisfy the desired properties. 
\end{proof}

\begin{lemma} \label{functor_sincere}
Let $\cat{E}$ be a Frobenius exact category with $\cat{P}$ the full subcategory of projective-injective objects and $q \colon \cat{E} \to \stab{\cat{E}}$ the canonical functor. Let $X \in \kbcat{\cat{E}}$. If $q(X) = 0$ in $\kbcat{\stab{\cat{E}}}$, then $X \in \kbcat{\cat{P}}$. 
\end{lemma}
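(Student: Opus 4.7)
The proof is by induction on $\ell(X) \colonequals \#\{i \in \BZ : X^i \neq 0\}$, the length of the support of $X$. The base case $\ell(X) \leq 1$ is immediate: if $X = X^a[-a]$ is concentrated in a single degree, then $q(X) \cong 0$ in $\kbcat{\stab{\cat{E}}}$ forces $q(X^a) \cong 0$ in $\stab{\cat{E}}$, so $\id_{X^a}$ factors through a projective-injective and hence $X^a \in \cat{P}$.

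For the inductive step with $X$ supported in $[a, b]$ and $b > a$, a chain contraction of $q(X)$ yields at the top a map $s^b\colon q(X^b) \to q(X^{b-1})$ with $q(d^{b-1}) \circ s^b = \id_{q(X^b)}$. Lifting $s^b$ to $\sigma\colon X^b \to X^{b-1}$ in $\cat{E}$ gives $d^{b-1}\sigma - \id_{X^b} = \gamma\delta$ factoring through some $P \in \cat{P}$, with $\delta\colon X^b \to P$ and $\gamma\colon P \to X^b$. Thus $(d^{b-1}, -\gamma)\colon X^{b-1}\oplus P \to X^b$ is a split epimorphism in $\cat{E}$ (with section built from $\sigma$ and $\delta$), and by idempotent completeness $X^{b-1}\oplus P \cong X^b \oplus K$ in $\cat{E}$ for some complement $K$.

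The main step is to show that $X \cong Y$ in $\kbcat{\cat{E}}$ for a complex $Y$ with $Y^i = X^i$ for $i \leq b-2$, $Y^{b-1} = K$, and $Y^b = P \in \cat{P}$. The plan is to add the contractible complex $P \xrightarrow{\id_P} P$ in degrees $[b-1, b]$ to $X$, obtaining $\tilde X \cong X$ in $\kbcat{\cat{E}}$ with $\tilde X^{b-1} = X^{b-1}\oplus P$ and $\tilde X^b = X^b \oplus P$, and then apply the change of basis $\Phi\colon \tilde X^{b-1} \xrightarrow{\sim} X^b \oplus K$ induced by the split epi, together with the automorphism of $\tilde X^b$ given by $\bigl(\begin{smallmatrix}\id & -\gamma\\-\delta & \id+\delta\gamma\end{smallmatrix}\bigr)$ (whose inverse is $\bigl(\begin{smallmatrix}\id+\gamma\delta & \gamma\\\delta & \id\end{smallmatrix}\bigr)$). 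A direct matrix calculation using $d^{b-1}\sigma = \id_{X^b} + \gamma\delta$ repeatedly shows that the resulting top differential on $X^b \oplus K \to X^b \oplus P$ is block-diagonal with blocks $\id_{X^b}$ and some $\pi\colon K \to P$, and that the differential from $X^{b-2}$ lands entirely in the $K$-summand. So in $\cbcat{\cat{E}}$ the new complex decomposes as $(X^b \xrightarrow{\id} X^b) \oplus Y$, giving $X \cong Y$ in $\kbcat{\cat{E}}$.

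To finish, let $Y'$ denote the truncation of $Y$ to degrees $\leq b-1$. There is a triangle $P[-b] \to Y \to Y' \to P[-b+1]$ in $\kbcat{\cat{E}}$ coming from the degreewise-split short exact sequence; applying $q$ and using $q(P)=0$ yields $q(Y')\cong q(Y) = q(X) = 0$. Since $\ell(Y') < \ell(X)$, induction gives $Y' \in \kbcat{\cat{P}}$; combined with $P[-b] \in \kbcat{\cat{P}}$ and the triangle, we conclude $Y \in \kbcat{\cat{P}}$ and hence $X \in \kbcat{\cat{P}}$. The hard part of the argument is the explicit matrix computation in the third paragraph: choosing the two automorphisms so that the splitting datum $(\sigma,\gamma,\delta)$ simultaneously diagonalizes the top differential and renders the differential coming from below compatible with the direct-sum decomposition.
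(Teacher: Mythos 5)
Your proof is correct in its essential structure, but it takes a route genuinely different from the paper's: you work at the \emph{top} degree of the complex, whereas the paper works at the \emph{bottom}. The paper observes that $q(d^0)$ is a split monomorphism, lifts the stable retraction to $s\colon X^1\to X^0$ with $sd^0-\id=ba$ through $P\in\cat{P}$, and directly forms $X'=\cone(\susp^{-1}a)$, a complex whose degree-$0$ differential $\binom{d^0}{a}\colon X^0\to X^1\oplus P$ is a split monomorphism in $\cat{E}$; truncating away the split piece reduces the length by one, and a single exact triangle $X'\to X\to P\to\susp X'$ transports the conclusion back to $X$. You instead dualize: $q(d^{b-1})$ is a stably split epimorphism, and you make it an honest split epimorphism by adding a contractible complex $P\xrightarrow{\id}P$ in degrees $b-1,b$ and applying a change of basis, using the automorphism $\Psi=\bigl(\begin{smallmatrix}\id&-\gamma\\-\delta&\id+\delta\gamma\end{smallmatrix}\bigr)$. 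Your matrix computation is correct: $\Psi\tilde d^{b-1}\Phi^{-1}$ becomes $\operatorname{diag}(\id_{X^b},\pi)$, and the incoming differential lands in $K$ because its first component after $\Phi$ is $d^{b-1}d^{b-2}=0$. This splits off $X^b\xrightarrow{\id}X^b$ and leaves $Y$ with $Y^b=P$. A drawback of adding the contractible in both degrees $b-1$ and $b$, rather than forming the cone of $-\gamma\colon P[-b]\to X$ (the exact mirror of the paper's construction, which adds $P$ only in degree $b-1$), is that you then need a second reduction step, the extra triangle $P[-b]\to Y\to Y'$, to finally shorten the complex. The net effect is the same; the paper's version reaches the inductive hypothesis in one move.

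There is one small gap to repair: your induction measure $\ell(X)=\#\{i:X^i\neq 0\}$ does not necessarily strictly decrease. If $X^{b-1}=0$ (a complex with a gap directly below the top degree) then $\sigma=0$, $-\id_{X^b}=\gamma\delta$ exhibits $X^b$ as a retract of $P$, and $K=\ker(-\gamma)$ may be nonzero; in that case $Y'$ is supported in $[a,b-1]$ with $Y'^{b-1}=K\neq 0$ and $\ell(Y')=\ell(X)$, so the induction stalls. This is easily fixed: either induct on the width $b-a$ of the support interval, as the paper effectively does (its $Y$ sits in degrees $[1,n]$, so $n$ drops); or choose $P=X^b$ and $\gamma=-\id$ in this degenerate case so that $K=0$; or simply note that when $X^{b-1}=0$ the complex splits off $X^b[-b]$ with $X^b\in\cat{P}$ directly. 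Finally, your explicit appeal to idempotent completeness in producing $K$ is a fair point to flag, but the paper's own proof makes an equivalent implicit use of it when it truncates $X'$ past a split monomorphism, so this is a shared hypothesis rather than a defect peculiar to your argument.
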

\begin{proof}
It is enough to show the claim for a complex of the form
\[
X = (\cdots \to 0 \to X^{0} \xrightarrow{d^0} X^1 \xrightarrow{d^1} X^2 \to \cdots \to X^{n-1} \xrightarrow{d^{n-1}} X^n \to 0 \to \cdots)
\]
for any $n \geq 0$. We use induction on $n$. 

For $n = 0$, the assumption $q(X) = 0 $ implies $q(X^0) = 0$. Hence $X^0 \in \cat{P}$.

Let $n \geq 1$. As $q(X) = 0$, the morphism $q(d^0)$ is a split monomorphism in $\stab{\cat{E}}$ and there exists a morphism $s\colon X^1\to X^0$ such that $s d^0 - \id_{X^0} = ba$ for some morphisms $X^0 \xrightarrow{a} P \xrightarrow{b} X^0$ with $P \in \cat{P}$. We view $P$ as a complex concentrated in degree zero and set
\begin{equation*}
X' \colonequals \cone(\susp^{-1} a) = (\cdots \to 0 \to X^0 \xrightarrow{\left(\begin{smallmatrix} d^0 \\a \end{smallmatrix}\right)} X^1\oplus P \xrightarrow{\left(\begin{smallmatrix}d^1 & 0\end{smallmatrix}\right)} X^2 \xrightarrow{d^2} X^3 \to \cdots)\,.
\end{equation*}
Since $\left(\begin{smallmatrix} s & -b \end{smallmatrix}\right) \circ \left(\begin{smallmatrix} d^0 \\a \end{smallmatrix}\right) = \id_{X^0}$, the zero differential of $X'$ is a split monomorphism in $\cat{E}$. Therefore, in $\kbcat{\cat{E}}$, the complex $X'$ is isomorphic to a complex $Y$ concentrated between degrees 1 and $n$. In $\kbcat{\stab{\cat{E}}}$ we have $q(Y) \cong q(X') \cong q(X) = 0$. By induction hypothesis we have $X' \cong Y \in \kbcat{\cat{P}}$. By construction there is an exact triangle $X' \to P \to X \to \susp X'$, and as $X', P \in \kbcat{\cat{P}}$, so is $X$.
\end{proof}

\begin{proof}[Proof of \cref{kcat_stable}]
As $q(\kbcat{\cat{P}}) = 0$ in $\kbcat{\stab{\cat{E}}}$, the functor $q \colon \kbcat{\cat{B}} \to \kbcat{\cat{C}}$ induces a triangle functor
\begin{equation*}
\overline{q} \colon \kbcat{\cat{B}}/\kbcat{\cat{P}} \to \kbcat{\cat{C}}\,.
\end{equation*}
We claim that $\overline{q}$ is an equivalence of triangulated categories. For this we need to show that $\overline{q}$ is full, faithful and essentially surjective. 

The functor $\overline{q}$ is full by \cref{functor_full}. By \cref{functor_sincere}, whenever $\overline{q}(X) = 0$ then $X = 0$. As we already know that $\overline{q}$ is full, this implies that $\overline{q}$ is faithful by \cite[p.~446]{Rickard:1989}; also see \cite[4.3, 4.4]{Pu/Ringel:2015}. 

It remains to show that $\overline{q}$ is essentially surjective. The essential image of $\overline{q}$ is a thick subcategory containing the complexes concentrated in degree zero. As these complexes generate $\kbcat{\cat{C}}$, the functor $\overline{q}$ is essentially surjective.
\end{proof}

\bibliographystyle{amsalpha}
\bibliography{Lit}

\providecommand{\bysame}{\leavevmode\hbox to3em{\hrulefill}\thinspace}
\providecommand{\MR}{\relax\ifhmode\unskip\space\fi MR }
\providecommand{\MRhref}[2]{%
  \href{http://www.ams.org/mathscinet-getitem?mr=#1}{#2}
}
\providecommand{\href}[2]{#2}
\begin{thebibliography}{BPPW24}

\bibitem[AR91]{Auslander/Reiten:1991b}
Maurice Auslander and Idun Reiten, \emph{Applications of contravariantly finite
  subcategories}, Adv. Math. \textbf{86} (1991), no.~1, 111--152. \MR{1097029}

\bibitem[BBD82]{Beilinson/Bernstein/Deligne:1982}
Aleksandr~A. Be{\u\i}linson, Joseph~N. Bern{\v{s}}te{\u\i}n, and Pierre
  Deligne, \emph{Faisceaux pervers}, Analyse et topologie sur les espaces
  singuliers {I}, {P}roc. of the {C}onf. ({L}uminy, 1981), Ast{\'e}risque, vol.
  100, Soc. Math. France, Paris, 1982, pp.~5--171. \MR{0751966}

\bibitem[BK89]{Bondal/Kapranov:1989}
Alexey~I. Bondal and Mikhail~M. Kapranov, \emph{Representable functors, {S}erre
  functors, and reconstructions}, Izv. Akad. Nauk SSSR Ser. Mat. \textbf{53}
  (1989), no.~6, 1183--1205, 1337 (Russian), English transl.:
  \bibentry{Bondal/Kapranov:1989transl}. \MR{1039961}

\bibitem[BPPW24]{Broomhead/Pauksztello/Ploog/Woolf:2023}
Nathan Broomhead, David Pauksztello, David Ploog, and Jon Woolf, \emph{The
  heart fan of an abelian category}, 2024.

\bibitem[Buc21]{Buchweitz:2021}
Ragnar-Olaf Buchweitz, \emph{Maximal {C}ohen-{M}acaulay modules and {T}ate
  cohomology}, Math. Surveys Monogr., vol. 262, American Mathematical Society,
  Providence, RI, 2021, With appendices and an introduction by Luchezar L.
  Avramov, Benjamin Briggs, Srikanth B. Iyengar and Janina C. Letz.
  \MR{4390795}

\bibitem[B{\"u}h10]{Buehler:2010}
Theo B{\"u}hler, \emph{Exact categories}, Expo. Math. \textbf{28} (2010),
  no.~1, 1--69. \MR{2606234}

\bibitem[Che10]{Chen:2010}
Xiao-Wu Chen, \emph{A short proof of {HRS}-tilting}, Proc. Amer. Math. Soc.
  \textbf{138} (2010), no.~2, 455--459. \MR{2557163}

\bibitem[CHZ19]{Chen/Han/Zhou:2019}
Xiao-Wu Chen, Zhe Han, and Yu~Zhou, \emph{Derived equivalences via
  {HRS}-tilting}, Adv. Math. \textbf{354} (2019), 106749.26. \MR{3989131}

\bibitem[Dye05]{Dyer:extrinotes}
Matthew~J. Dyer, \emph{Exact subcategories of triangulated categories},
  https://www3.nd.edu/~dyer/papers/extri.pdf, 2005, (accessed September 29,
  2023), pp.~1--6.

\bibitem[Hap88]{Happel:1988}
Dieter Happel, \emph{Triangulated categories in the representation theory of
  finite-dimensional algebras}, London Math. Soc. Lecture Note Ser., vol. 119,
  Cambridge University Press, Cambridge, 1988. \MR{0935124}

\bibitem[HKvR22]{Henrard/Kvamme/vanRoosmalen:2022}
Ruben Henrard, Sondre Kvamme, and Adam-Christiaan van Roosmalen,
  \emph{Auslander's formula and correspondence for exact categories}, Adv.
  Math. \textbf{401} (2022), Paper No. 108296, 65. \MR{4392222}

\bibitem[Hub16]{Hubery:2016}
Andrew Hubery, \emph{Characterising the bounded derived category of an
  hereditary abelian category}, arXiv e-prints, 2016, arXiv:1612.06674v1,
  pp.~1--16.

\bibitem[HvR20]{Henrard/vanRoosmalen:2020c}
Ruben Henrard and Adam-Christiaan van Roosmalen, \emph{Preresolving categories
  and derived equivalences}, arXiv e-prints, 2020, arXiv:2010.13208v1,
  pp.~1--14.

\bibitem[IY20]{Iyama/Yang:2020}
Osamu Iyama and Dong Yang, \emph{Quotients of triangulated categories and
  equivalences of {B}uchweitz, {O}rlov, and {A}miot-{G}uo-{K}eller}, Amer. J.
  Math. \textbf{142} (2020), no.~5, 1641--1659. \MR{4150654}

\bibitem[Kel90]{Keller:1990}
Bernhard Keller, \emph{Chain complexes and stable categories}, Manuscripta
  Math. \textbf{67} (1990), no.~4, 379--417. \MR{1052551}

\bibitem[Kel91]{Keller:1991}
\bysame, \emph{Derived categories and universal problems}, Comm. Algebra
  \textbf{19} (1991), no.~3, 699--747. \MR{1102982}

\bibitem[Kel06]{Keller:2006b}
\bysame, \emph{On differential graded categories}, Intl. {C}ongr. of
  {M}athematicians. {V}ol. {II} (Marta Sanz-Sol{\'e}, Javier Soria, Juan~Luis
  Varona, and Joan Verdera, eds.), Eur. Math. Soc., Z{\"u}rich, 2006,
  pp.~151--190. \MR{2275593}

\bibitem[KK20]{Keller/Krause:2020}
Bernhard Keller and Henning Krause, \emph{Tilting preserves finite global
  dimension}, C. R. Math. Acad. Sci. Paris \textbf{358} (2020), no.~5,
  563--570. \MR{4149855}

\bibitem[Kra22]{Krause:2022}
Henning Krause, \emph{Homological theory of representations}, Cambridge Stud.
  Adv. Math., vol. 195, Cambridge University Press, Cambridge, 2022.
  \MR{4327095}

\bibitem[KS90]{Kashiwara/Schapira:1990}
Masaki Kashiwara and Pierre Schapira, \emph{Sheaves on manifolds}, Grundlehren
  Math. Wiss., vol. 292, Springer-Verlag, Berlin, 1990, With a chapter in
  French by Christian Houzel. \MR{1074006}

\bibitem[KV87]{Keller/Vossieck:1987}
Bernhard Keller and Dieter Vossieck, \emph{Sous les cat\'egories d\'eriv\'ees},
  C. R. Acad. Sci. Paris S\'er. I Math. \textbf{305} (1987), no.~6, 225--228.
  \MR{907948}

\bibitem[Nee90]{Neeman:1990}
Amnon Neeman, \emph{The derived category of an exact category}, J. Algebra
  \textbf{135} (1990), no.~2, 388--394. \MR{1080854}

\bibitem[Nee91]{Neeman:1991}
\bysame, \emph{Some new axioms for triangulated categories}, J. Algebra
  \textbf{139} (1991), no.~1, 221--255. \MR{1106349}

\bibitem[Nee24]{Neeman:2024}
\bysame, \emph{Obstructions to the existence of bounded {$t$}-structures},
  Triangulated categories in representation theory and beyond---the {A}bel
  {S}ymposium 2022 (Petter~Andreas Bergh, Steffen Oppermann, and {\O}yvind
  Solberg, eds.), Abel Symp., vol.~17, Springer, Cham, 2024, pp.~195--215.
  \MR{4786507}

\bibitem[Por17]{Porta:2015}
Marco Porta, \emph{Universal property of triangulated derivators via keller's
  towers}, 2017.

\bibitem[Pos11]{Positselski:2011}
Leonid Positselski, \emph{Mixed {A}rtin-{T}ate motives with finite
  coefficients}, Mosc. Math. J. \textbf{11} (2011), no.~2, 317--402.
  \MR{2859239}

\bibitem[PV18]{Psaroudakis/Vitoria:2018}
Chrysostomos Psaroudakis and Jorge Vit{\'o}ria, \emph{Realisation functors in
  tilting theory}, Math. Z. \textbf{288} (2018), no.~3-4, 965--1028, With an
  appendix by Ester Cabezuelo Fern{\'a}ndez and Olaf M. Schn{\"u}rer.
  \MR{3778987}

\bibitem[Ric89]{Rickard:1989}
Jeremy Rickard, \emph{Morita theory for derived categories}, J. London Math.
  Soc. (2) \textbf{39} (1989), no.~3, 436--456. \MR{1002456}

\bibitem[RZ15]{Pu/Ringel:2015}
Claus~Michael Ringel and Pu~Zhang, \emph{Objective triangle functors}, Sci.
  China Math. \textbf{58} (2015), no.~2, 221--232.

\end{thebibliography}

\end{document}